\def\demo{\noindent{\bf Proof. }}
\def\sqr#1#2{{\vcenter{\hrule height.#2pt
            \hbox{\vrule width.#2pt height#1pt \kern#1pt
                    \vrule width.#2pt}
            \hrule height.#2pt}}}
\def\square{\mathchoice\sqr64\sqr64\sqr{4}3\sqr{3}3}
\def\QED{\hfill$\square$}
\def\tratto{\mbox{\rule{2mm}{.2mm}$\;\!$}}
\def\m{{\mathfrak m}}
\def\I{\overline{I}}
\newtheorem{theorem}{Theorem}[section]
\newtheorem{proposition}[theorem]{Proposition}
\newtheorem{discussion}[theorem]{Discussion}
\newtheorem{remark}[theorem]{Remark}
\newtheorem{example}[theorem]{Example}
\begin{document}

\baselineskip=16pt

\title[Normal Hilbert coefficients]{\bf Bounds on the normal Hilbert coefficients } 

\author[A. Corso, C. Polini and M.E. Rossi]
{Alberto Corso \and Claudia Polini \and Maria Evelina Rossi}

\address{Department of Mathematics, University of Kentucky,
Lexington, Kentucky 40506 - USA} \email{alberto.corso@uky.edu}

\address{Department of Mathematics, University of Notre Dame,
Notre Dame, Indiana 46556 - USA} \email{cpolini@nd.edu}

\address{Dipartimento di Matematica, Universit\`a di Genova, Via Dodecaneso 35,
16132 Genova - Italy} \email{rossim@dima.unige.it}

\subjclass[2000]{Primary 13A30,13B21, 13D40; Secondary 13H10, 13H15.}

\keywords{Hilbert functions, Hilbert coefficients, associated graded rings, Sally modules, normal filtrations.}

\thanks{The second author was partially supported by NSF grant DMS-1202685 and NSA grant H98230-12-1-0242.}

\begin{abstract}
In this paper we consider extremal and almost extremal bounds on the normal Hilbert coefficients of 
${\mathfrak m}$-primary ideals of an analytically unramified Cohen-Macaulay ring $R$ of dimension 
$d>0$ and infinite residue field. In these circumstances we show that the associated graded ring of the 
normal filtration of the ideal is either Cohen-Macaulay or almost Cohen-Macaulay.
\end{abstract}

\vspace{-0.1in}

\maketitle

\vspace{-0.2in}

\section{Introduction}

The examination of the asymptotic properties of ${\mathfrak m}$-primary ideals of a
Cohen-Macaulay local ring $(R, {\mathfrak m})$ of dimension $d$ and infinite residue field 
has evolved into a challenging area of research, touching most aspects
of commutative algebra, including its interaction with
computational algebra and algebraic geometry. It takes expression
in two graded algebras attached to $I$:  the {\it Rees algebra} 
${\mathcal R}={\mathcal R}(I)$ and  the {\it associated graded
ring} ${\mathcal G}={\mathcal G}(I)$; namely,
\[
{\mathcal R}= \bigoplus_{k=0}^{\infty} I^kt^k \subset R[t], \qquad \text{and} \qquad {\mathcal G} =
{\mathcal R}/I\,{\mathcal R} = \bigoplus_{k=0}^{\infty} I^k/I^{k+1},
\]
where $R[t]$ is the polynomial ring in the variable $t$ over $R$.
These two graded objects are collectively referred to as {\it blowup algebras} of $I$
as they play a crucial role in the process of blowing up the
variety ${\rm Spec}(R)$ along the subvariety $V(I)$.

\medskip

A successful approach in the study of the ring-theoretic
properties of the blowup algebras uses a minimal reduction of
the ideal. This notion was first introduced and exploited by Northcott and Rees more than half a
century ago for its effectiveness in studying multiplicities in
local rings \cite{NR}: an ideal $J$ is a reduction of $I$ if the
inclusion of Rees algebras ${\mathcal R}(J) \hookrightarrow {\mathcal R}(I)$ is module
finite. 
Since $I$ is also an ${\mathfrak m}$-primary ideal, another pathway to
studying blowup algebras -- and more precisely ${\mathcal G}$
-- is to make use of information encoded in the Hilbert-Samuel
function of $I$, that is the function that measures the growth of
the length of $R/I^n$, denoted $\lambda(R/I^n)$, for all $n \geq
1$. For $n \gg 0$, it is known that $\lambda(R/I^n)$ is a
polynomial in $n$ of degree $d$, whose normalized coefficients $e_i=e_i(I)$ are called the Hilbert
coefficients of $I$. The general philosophy, pioneered by Sally in a sequence of remarkable 
papers (see \cite{S,S1,S2,S3,S4}), is that an `extremal' behavior of bounds
involving the $e_i$'s yields good depth properties of the
associated graded ring of $I$. 
These results are somewhat unexpected since the Hilbert coefficients
encode asymptotic information on the Hilbert-Samuel function of $I$. 
The literature is very rich of results, especially relating $e_0$ through $e_3$ to other data of our ideal. We refer to 
the monograph by Rossi and Valla \cite{RV} for a collective overview.

\medskip

Over the years, various filtrations other than the $I$-adic one have proved to be of far reaching applications
in commutative algebra: the integral closure filtration, the Ratliff-Rush filtration, 
the tight closure filtration, and the symbolic power filtration, just to name a few. 
It is important to observe that the theory and results that are valid in the case of the $I$-adic filtration cannot 
be trivially extended to these other types of filtrations. These in fact are not, in general, good or stable filtrations. 
In other words, the Rees algebra associated to these filtrations may not be generated in degree one or may even fail to be Noetherian.
 
\medskip

The focus of our paper is on the significance of the asymptotic properties encoded in the Hilbert function of the integral closure 
filtration of an ${\mathfrak m}$-primary ideal $I$, namely $\{ \overline{I^n} \}$. 
In addition to the local Cohen-Macaulay property of our ambient ring $R$ we also require it to be analytically unramified, that is, 
its ${\mathfrak m}$-adic completion $\widehat{R}$ is reduced. This latter assumption guarantees that the normalization 
$\overline{{\mathcal R}}$ of the Rees algebra ${\mathcal R}$ of $I$ in $R[t]$ is Noetherian (see \cite{Rees}). Hence we have that 
$\lambda(R/\overline{I^{n+1}})$ is a polynomial in $n$ of degree $d$ for $n \gg 0$
\[
\lambda(R/\overline{I^{n+1}}) = \overline{e}_0{ n+d \choose d } - \overline{e}_1{ n+d-1 \choose d-1} + \cdots + (-1)^d \overline{e}_d.
\]
The above polynomial is referred to as the normal Hilbert polynomial of $I$ and the $\overline{e}_i=\overline{e}_i(I)$'s are the normal Hilbert coefficients.
We note that $e_0=\overline{e}_0$ is the multiplicity of $I$.
As in the case of the $I$-adic filtration, there has been quite some interest in relating bounds among the normalized Hilbert coefficients
and the depth of the associated graded ring of the normal filtration of $I$, denoted $\overline{\mathcal G}$.
The forerunners of results along this line of investigation are Huneke (see \cite{H}) and Itoh (see \cite{It,I}).  We refer again to \cite{RV} for a detailed 
account of the results, which typically deal with the first few normal Hilbert coefficients, as in the $I$-adic case.

\medskip

We now describe the contents of our paper. In Section 2 we first introduce a version of the Sally module for the normal filtration. 
We prove in Proposition~\ref{dimension_sally} analogous homological properties to the ones of the classical Sally module 
(see \cite{V}). This allows us to recover, in Proposition~\ref{prop2.3}, the bound $ \overline{e}_1 \geq e_0 - \lambda(R/\overline{I})$, established by 
Huneke \cite[theorem 4.5]{H} and Itoh \cite[Corollary]{I}. That the  equality in the bound is equivalent to the Cohen-Macaulayness of 
$\overline{{\mathcal G}}$ translates in our setting to the vanishing of the variant of the Sally module. We then show in 
Theorem~\ref{theorem2.6} the main result of this section. Namely, we show that if the previous bound is almost extremal, that is 
$\overline{e}_1 \leq e_0 - \lambda(R/\overline{I})+1$, then the depth of $\overline{{\mathcal G}}$ is at least $d-1$.

\medskip

In \cite{I}, Itoh already established lower bounds on $\overline{e}_2$ and $\overline{e}_3$. 
More precisely, he showed that $\overline{e}_2 \geq \overline{e}_1 - \lambda(\overline{I}/J)$ with equality if and only if the normal filtration of $I$ 
has reduction number two (see \cite[Theorem~2(2)]{I}). In particular $\overline{\mathcal G}$ is Cohen-Macaulay. He also showed that 
$\overline{e}_3 \geq 0$ (see \cite[Theorem~3(1)]{I}). When $R$ is Gorenstein and $\overline{I}={\mathfrak m}$ he was able to conclude that 
the vanishing of $\overline{e}_3$ is equivalent to the normal filtration of $I$  having reduction number two (see \cite[Theorem~3(2)]{I}). 
Again this implies that $\overline{\mathcal G}$ is Cohen-Macaulay.

In Section 3 we generalize Itoh's result on the vanishing of $\overline{e}_3$ by considering arbitrary 
Cohen-Macaulay rings of type $t(R)$ together with the assumption $\lambda(\overline{I^2}/J\overline{I}) \geq t(R)-1$ 
(see Theorem~\ref{theorem3.3}).  Obviously the latter is a vacuous assumption  when $R$ is Gorenstein. We also 
extend Itoh's result with no further assumptions to rings of type at most two  (see Theorem~\ref{theorem3.5}). 
We note that a condition on the type of the ring is not unexpected as it is reminiscent of the celebrated result 
of Sally \cite[Theorem~3.1]{S4} on  Cohen-Macaulay rings of type $e+d-2$.

\bigskip

\section{The Sally module and Hilbert coefficients of the normal filtration}

One of the first inequalities involving the Hilbert coefficients of an ${\mathfrak m}$-primary ideal $I$ goes back to 1960, 
when Northcott \cite{No} showed that $e_1-e_0+\lambda(R/I) \geq 0$. Later it was shown by Huneke \cite{H} and Ooishi \cite{Oo}
that equality  is  equivalent to the ideal having reduction number one for any minimal reduction $J$ of $I$, that is $I^2=JI$. 
Hence ${\mathcal G}$ is Cohen-Macaulay. 

An elegant and theoretical explanation of the results by Northcott, Huneke and Ooishi was captured by 
Vasconcelos \cite{V} with the introduction of a new graded object: the so-called Sally module. 
As noted in the monograph \cite{RV}, the Sally module can actually be defined for an 
arbitrary filtration. However, additional properties on the filtration are needed to be able to do the extra mile.
This is the case in this article where we consider the normal filtration of an ideal.

The Sally module of the normal filtration of an ${\mathfrak m}$-primary ideal $I$ with minimal reduction $J$ 
is defined by the short exact sequence of ${\mathcal R}(J)$-modules
\begin{eqnarray} \label{eq_sally}
0 \rightarrow \overline{I} {\mathcal R}(J) \longrightarrow \overline{{\mathcal R}}_{\geq 1}[-1] \longrightarrow
\overline{{\mathcal S}} \rightarrow 0.
\end{eqnarray}
More explicitly, one has
\[
\overline{{\mathcal S}} = \bigoplus_{n \geq 1} \overline{I^{n+1}}/J^{n}\overline{I}.
\]

In Proposition~\ref{dimension_sally} we establish  a key homological property of $\overline{{\mathcal S}}$ that will 
be used in Theorem~\ref{theorem2.6}, the main theorem of this section. This is the same property  of the classical Sally module. 
However it does not follow directly from the original result because the normal filtration is not multiplicative.

\begin{proposition}\label{dimension_sally}
Let $(R, {\mathfrak m})$ be an analytically unramified, Cohen-Macaulay local ring of dimension $d>0$ and infinite residue field. 
Let $I$ be an ${\mathfrak m}$-primary ideal, $J$ a minimal reduction of $I$, and
$\overline{\mathcal S}$ the Sally module of the normal filtration of $I$ 
with respect to $J$, as defined in \mbox{\rm (\ref{eq_sally})}. Then
$\overline{{\mathcal S}}$ is a nonzero module if and only if ${\rm Ass}_{{\mathcal R}(J)}(\overline{{\mathcal S}})=
\{ \, {\mathfrak m} {\mathcal R}(J) \, \}$. In particular, $\overline{\mathcal S}$ has dimension $d$.
\end{proposition}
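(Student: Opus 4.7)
The proof strategy follows Vasconcelos' classical argument for the Sally module of the $I$-adic filtration, adapted to cope with the non-multiplicativity of the normal filtration. Set $T = \mathcal{R}(J)$; the key structural observation is that the fiber cone $T/\mathfrak{m} T$ is the polynomial ring $(R/\mathfrak{m})[T_1, \ldots, T_d]$ (since $J$ is generated by a regular sequence in the Cohen-Macaulay ring $R$), so $T/\mathfrak{m} T$ is a $d$-dimensional domain. The reverse direction of the biconditional is immediate, so I concentrate on the forward direction and the dimension statement.

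The preliminary step is to show every associated prime of $\overline{\mathcal{S}}$ over $T$ contains $\mathfrak{m} T$. First, $\overline{\mathcal{S}}$ is finitely generated as a $T$-module: the analytic unramifiedness of $R$ makes $\overline{\mathcal{R}}$ module-finite over $\mathcal{R}(I)$ by Rees's theorem (cited in the introduction), and the reduction property of $J$ makes $\mathcal{R}(I)$ module-finite over $T$. Each graded piece $\overline{I^{n+1}}/J^n \overline{I}$ is a subquotient of the Artinian ring $R/J^n$ and hence has finite length over $R$; finite generation then gives $\mathfrak{m}^L \overline{\mathcal{S}} = 0$ for some $L$. Since $T/\mathfrak{m} T$ is a domain, $\mathfrak{m} T$ is the unique minimal prime of $T/\mathfrak{m}^L T$, so $\mathrm{Ass}_T(\overline{\mathcal{S}}) \subseteq V(\mathfrak{m} T)$. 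This also gives the upper bound $\dim \overline{\mathcal{S}} \leq \dim T/\mathfrak{m} T = d$.

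The heart of the proof is to show $\mathrm{Ass}_T(\overline{\mathcal{S}}) = \{\mathfrak{m} T\}$, which I would do by producing a $T$-regular sequence of length $d$ on $\overline{\mathcal{S}}$; combined with the bound $\dim \overline{\mathcal{S}} \leq d$, this makes $\overline{\mathcal{S}}$ Cohen-Macaulay of dimension $d$ over $T$ (when nonzero), so $\mathrm{Ass}_T(\overline{\mathcal{S}})$ consists of minimal primes of the support, and since $\mathfrak{m} T$ is the unique minimal prime in $V(\mathfrak{m} T)$ the conclusion follows. Using the infinite residue field hypothesis, pick a minimal generating set $x_1, \ldots, x_d$ of $J$ that is also a superficial sequence for $\{\overline{I^n}\}$; by Cohen-Macaulayness this is a regular sequence on $R$, and superficiality gives $\overline{I^{n+1}} : x_i = \overline{I^n}$ for $n \gg 0$. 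One then shows that each $x_i^* = x_i t \in T_1$ is a non-zerodivisor on $\overline{\mathcal{S}}/(x_1^*, \ldots, x_{i-1}^*)\overline{\mathcal{S}}$; the first step amounts to proving that whenever $u \in \overline{\mathcal{S}}_n$ is represented by $\widetilde{u} \in \overline{I^{n+1}}$ with $x_1 \widetilde{u} \in J^{n+1} \overline{I}$, necessarily $\widetilde{u} \in J^n \overline{I}$.

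The main obstacle is precisely this colon step. In the classical $I$-adic setting the relation $I^{n+1} = J I^n$ past the reduction number lets one squeeze $\widetilde{u}$ into $J^n I$ directly from the superficial colon, but the analogous identity $\overline{I^{n+1}} = J \overline{I^n}$ is false in general for the normal filtration---this is exactly what the paper flags. One must work instead with the decomposition $J^{n+1} \overline{I} = x_1 J^n \overline{I} + (x_2, \ldots, x_d) J^n \overline{I}$ together with the Koszul regularity of $x_1, \ldots, x_d$ and an Artin-Rees-type argument inside $\overline{\mathcal{R}}$, using that $x_1$ remains a non-zerodivisor on $\overline{\mathcal{R}}$, to pull $\widetilde{u}$ back into $J^n \overline{I}$. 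The inductive step for $i \geq 2$ follows by identifying $\overline{\mathcal{S}}/(x_1^*, \ldots, x_i^*)\overline{\mathcal{S}}$ with the Sally module of the normal filtration of the image of $I$ in $R/(x_1, \ldots, x_i)$, after checking the necessary compatibility of normal filtrations with modding out by a superficial regular sequence.
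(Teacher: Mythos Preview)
Your preliminary step---showing that every associated prime of $\overline{\mathcal{S}}$ contains $\mathfrak{m}T$ via the annihilation $\mathfrak{m}^L \overline{\mathcal{S}} = 0$---is correct, though done a bit differently from the paper, which instead localizes the defining sequence at primes $\mathfrak{p}$ of $R$ with $\mathfrak{p}\neq\mathfrak{m}$.

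The heart of your proposal, however, overshoots the target and therefore cannot succeed. You aim to exhibit a $T$-regular sequence of length $d$ on $\overline{\mathcal{S}}$, which would make $\overline{\mathcal{S}}$ Cohen--Macaulay of dimension $d$. But the Sally module is \emph{not} Cohen--Macaulay in general, already in the classical $I$-adic case; indeed the entire content of Theorem~\ref{theorem2.6} is that Cohen--Macaulayness of $\overline{\mathcal{S}}$ can be secured only under the additional hypothesis that its multiplicity is one. If your argument went through unconditionally, Proposition~\ref{prop2.3}$(b)$ would yield $\mathrm{depth}\,\overline{\mathcal{G}} \geq d-1$ for every $\mathfrak{m}$-primary ideal, which is false. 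So the colon step you flag as the ``main obstacle'' is not merely technically delicate---it cannot hold in the strength you require. The proposed inductive identification of $\overline{\mathcal{S}}/(x_1^*,\ldots,x_i^*)\overline{\mathcal{S}}$ with a Sally module downstairs is likewise problematic, since integral closure does not commute with passage to $R/(x_1,\ldots,x_i)$.

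The paper's route avoids this by asking for much less: to rule out an associated prime $P$ strictly containing $\mathfrak{m}T$ it suffices to show $\mathrm{depth}\,\overline{\mathcal{S}}_P \geq 1$ whenever $\mathrm{ht}\,P \geq 2$. This comes from depth-chasing in the defining sequence (\ref{eq_sally}) after observing that $\overline{I}\,\mathcal{R}(J)$ is maximal Cohen--Macaulay (via $0 \to \overline{I}\,\mathcal{R}(J) \to \mathcal{R}(J) \to R/\overline{I}[T_1,\ldots,T_d] \to 0$) and that $\overline{\mathcal{R}}_{\geq 1}$ satisfies Serre's $S_2$ (via $0 \to \overline{\mathcal{R}}_{\geq 1} \to \overline{\mathcal{R}} \to R \to 0$, using that the normal ring $\overline{\mathcal{R}}$ is $S_2$). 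This $S_2$ argument is exactly what is needed and no more, and it is precisely what gets recycled in the proof of Theorem~\ref{theorem2.6}.
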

\begin{proof}
Let us assume that $\overline{\mathcal S} \not=0$, as the other implication is trivial. 
As ${\rm Ass}_{{\mathcal R}(J)}(\overline{\mathcal S}) \not= \emptyset$, let $P \in 
{\rm Ass}_{{\mathcal R}(J)}(\overline{\mathcal S})$ and write $P \cap R ={\mathfrak p}$. If ${\mathfrak p} \not=
{\mathfrak m}$ it follows that $\overline{\mathcal S}_{\mathfrak p}=0$,  from (\ref{eq_sally}) and the fact that 
$(\overline{I}{\mathcal R}(J))_{\mathfrak p} = (\overline{\mathcal R}_{\geq 1}[-1])_{\mathfrak p}$. Hence ${\mathfrak p}={\mathfrak m}$ 
and $P \supseteq {\mathfrak m}{\mathcal R}(J)$. 

We claim that $P={\mathfrak m}{\mathcal R}(J)$. Notice that 
${\mathfrak m}{\mathcal R}(J)$ is a prime of height $1$, thus if $P \supsetneq {\mathfrak m}{\mathcal R}(J)$ we have that 
$P$ is a prime ideal of height at least $2$. A depth computation is the short exact sequence (\ref{eq_sally}) 
shows that ${\rm depth} \ \overline{\mathcal S}_P \geq 1$, which contradicts the fact that $P$ is an associated prime of $\overline{\mathcal S}$.

The asserted depth estimate follows since $\overline{I} {\mathcal R}(J)$ is maximal Cohen-Macaulay and 
$\overline{\mathcal R}_{\geq 1}$ has property $S_2$ of Serre. The first assertion is a consequence 
of the short exact sequence
\[
0 \rightarrow \overline{I} {\mathcal R}(J) \longrightarrow {\mathcal R}(J) \longrightarrow
{\mathcal R}(J)/\overline{I} {\mathcal R}(J)  \rightarrow 0
\]
and the fact that the module ${\mathcal R}(J)/\overline{I} {\mathcal R}(J)$ is isomorphic to $R/\overline{I}[T_1, \ldots, T_d]$.
Now, the short exact sequence 
\[
0 \rightarrow \overline{\mathcal R}_{\geq 1} \longrightarrow \overline{\mathcal R} \longrightarrow
R  \rightarrow 0,
\]
establishes instead the second assertion, since $\overline{\mathcal R}$ has property $S_2$ of Serre and 
$R$ is Cohen-Macaulay  of dimension $d>0$. 
\end{proof}

Our next goal is to determine the relationship between the coefficients of the Hilbert polynomial of the 
Sally module $\overline{\mathcal S}$ and the ones of the Hilbert polynomial of the associated graded 
ring $\overline{\mathcal G}$ of the normal filtration of $I$. The construction we describe allows us, in particular, to 
give a concrete characterization of the {\it sectional normal genus} $g_s=\overline{e}_1-e_0+\lambda(R/\overline{I})$, 
defined by Itoh in \cite{I},  as the multiplicity
of the Sally module $\overline{\mathcal S}$. We then use this characterization to give an estimate of the depth
of the associated graded ring $\overline{\mathcal G}$.

\begin{discussion}{\rm
Following the construction of \cite[Proposition 6.1]{RV}, there exists a graded module $N$ which fits
in the short exact sequences
\begin{equation}\label{eq1}
0 \rightarrow {\mathcal G}({\mathbb E}) \longrightarrow N
\longrightarrow \overline{{\mathcal S}} [-1] \rightarrow 0
\end{equation}
\begin{equation}\label{eq2}
0 \rightarrow \overline{{\mathcal S}} \longrightarrow N
\longrightarrow \overline{\mathcal G} \rightarrow 0.
\end{equation}
More precisely, ${\mathbb E}$ denotes the $J$-good filtration 
$\{E_0=R, E_n=J^{n-1}\overline{I} \text{ for all } n \geq 1 \}$ induced by the $R$-ideal $\overline{I}$, 
${\mathcal G}({\mathbb E}) $ denotes the associated graded ring of ${\mathbb E}$, $\overline{\mathcal G}$ 
denotes the associated graded ring of the normal filtration of $I$, and, finally, $N$ denotes the graded 
module $\displaystyle\bigoplus_{n\geq 0} \overline{I^n}/J^n\overline{I}$.

From the results of \cite[Sections 6.1 and 6.2]{RV} modified to suit our situation, we have that the Hilbert 
series of the graded modules appearing in (\ref{eq1}) and (\ref{eq2}) are related by the equation
\begin{eqnarray} \label{eq4}
(1-z) HS_{\overline{\mathcal S}}(z) = HS_{{\mathcal G}({\mathbb E})}(z)-HS_{\overline{\mathcal G}}(z).
\end{eqnarray}
Furthermore, ${\mathcal G}({\mathbb E})$ is Cohen-Macaulay with minimal multiplicity since $E_{n+1}=JE_n$ for all $n\geq 1$. Hence
its Hilbert series is given by \cite[(6.1)]{RV}
\[
HS_{{\mathcal G}({\mathbb E})}(z) = \frac{\lambda(R/\overline{I})+(e_0-\lambda(R/\overline{I}))z}{(1-z)^d}.
\] }
\end{discussion}

By combining all this information we obtain the following result.

\begin{proposition}\label{prop2.3}
Let $(R, {\mathfrak m})$ be an analytically unramified, Cohen-Macaulay local ring of dimension $d>0$ 
and infinite residue field. Let $I$ be an ${\mathfrak m}$-primary ideal, 
$J$ a minimal reduction of $I$, and $\overline{\mathcal S}$ the Sally module of the normal filtration of $I$ 
with respect to $J$. Let $\overline{\mathcal G}$ denote the associated graded ring of the normal filtration of $I$.
Let $\overline{s}_i$ and $\overline{e}_i$ denote the normalized Hilbert coefficients of $\overline{\mathcal S}$ and 
$\overline{\mathcal G}$, respectively. The following properties hold:
\begin{itemize}
\item[$(a)$]
if \/ $\overline{{\mathcal S}}=0$ then $\overline{{\mathcal G}}$ is Cohen-Macaulay\/$;$

\item[$(b)$]
if \/ $\overline{{\mathcal S}} \not= 0$ then ${\rm depth} \ {\overline{\mathcal G}} \geq {\rm depth} \ {\overline{\mathcal S}} -1$\/$;$

\item[$(c)$]
$\overline{s}_0 = \overline{e}_1-e_0 + \lambda(R/\overline{I})$ and 
$\overline{s}_i = \overline{e}_{i+1}$ for all $1 \leq i \leq d-1$.
\end{itemize}
\end{proposition}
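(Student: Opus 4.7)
My plan is to deduce all three statements from the two short exact sequences~(\ref{eq1}) and~(\ref{eq2}) together with the Hilbert-series identity~(\ref{eq4}) established in the preceding discussion, leaning on Proposition~\ref{dimension_sally} for the fact that $\dim \overline{\mathcal{S}} = d$ whenever $\overline{\mathcal{S}} \neq 0$, and on the stated Cohen-Macaulayness of $\mathcal{G}(\mathbb{E})$. Part~(a) is then immediate: if $\overline{\mathcal{S}} = 0$, then (\ref{eq1}) yields $\mathcal{G}(\mathbb{E}) \cong N$ and (\ref{eq2}) yields $N \cong \overline{\mathcal{G}}$, so $\overline{\mathcal{G}}$ inherits the Cohen-Macaulay property from $\mathcal{G}(\mathbb{E})$.

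For part~(b), I would apply the depth lemma twice. From~(\ref{eq1}), since $\mathcal{G}(\mathbb{E})$ is Cohen-Macaulay of dimension $d$ and $\dim \overline{\mathcal{S}} \leq d$, I obtain $\mathrm{depth}\, N \geq \min\{d,\, \mathrm{depth}\, \overline{\mathcal{S}}\} = \mathrm{depth}\, \overline{\mathcal{S}}$ (grading shifts do not affect depth). Then (\ref{eq2}) gives
\[
\mathrm{depth}\, \overline{\mathcal{G}} \geq \min\{\mathrm{depth}\, N,\, \mathrm{depth}\, \overline{\mathcal{S}} - 1\} \geq \mathrm{depth}\, \overline{\mathcal{S}} - 1,
\]
which is the desired inequality.

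For part~(c), I would exploit (\ref{eq4}). Writing $HS_{\overline{\mathcal{S}}}(z) = s(z)/(1-z)^d$ (legitimate by Proposition~\ref{dimension_sally}) and $HS_{\overline{\mathcal{G}}}(z) = h(z)/(1-z)^d$, and combining with the explicit formula for $HS_{\mathcal{G}(\mathbb{E})}(z)$ from the discussion, the identity (\ref{eq4}) reduces to
\[
(1-z)\, s(z) = \lambda(R/\overline{I}) + \bigl(e_0 - \lambda(R/\overline{I})\bigr)\, z - h(z).
\]
Since $\overline{s}_i = s^{(i)}(1)/i!$ and $\overline{e}_i = h^{(i)}(1)/i!$, I would differentiate both sides $k$ times and evaluate at $z = 1$; by Leibniz the only surviving term on the left is $-k\, s^{(k-1)}(1)$. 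At $k=1$ this yields $\overline{s}_0 = \overline{e}_1 - e_0 + \lambda(R/\overline{I})$, and at each $k \geq 2$ it gives $-k\, s^{(k-1)}(1) = -h^{(k)}(1)$, hence $\overline{s}_{k-1} = \overline{e}_k$, which is exactly the claimed relation for $i = k-1 \geq 1$.

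The main conceptual hurdle, already resolved by Proposition~\ref{dimension_sally}, is ensuring $\dim \overline{\mathcal{S}} = d$: without this one could not normalize $HS_{\overline{\mathcal{S}}}$ with the same denominator $(1-z)^d$ as $HS_{\overline{\mathcal{G}}}$, and the depth comparison in~(b) would also degrade. Once this is in hand, the remaining work is a routine depth chase on (\ref{eq1})--(\ref{eq2}) and a straightforward manipulation of Hilbert series.
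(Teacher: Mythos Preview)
Your proposal is correct and follows essentially the same approach as the paper: part~(a) via the isomorphisms $\overline{\mathcal G}\cong N\cong \mathcal G(\mathbb E)$ from (\ref{eq1})--(\ref{eq2}), part~(b) by the depth chase on those same sequences, and part~(c) by extracting Hilbert coefficients from the identity~(\ref{eq4}) together with Proposition~\ref{dimension_sally}. The paper's proof is simply a terser version of what you have written, omitting the explicit depth-lemma inequalities and the Leibniz computation that you spell out.
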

\begin{proof}
$(a)$  follows from (\ref{eq1}) and (\ref{eq2}) because $\overline{\mathcal G} \cong N \cong {\mathcal G}({\mathbb E})$, 
whenever $\overline{\mathcal S}=0$, and ${\mathcal G}({\mathbb E})$ is Cohen-Macaulay. 
$(b)$ follows from depth chase in (\ref{eq1}) and (\ref{eq2}).
Finally, $(c)$ follows from (\ref{eq4}) and the fact, shown in Proposition~\ref{dimension_sally}, 
that if $\overline{\mathcal S} \not=0$ then its dimension is $d$.
\end{proof}

\begin{remark}{\rm
Proposition~\ref{prop2.3}$(c)$ provides a simple proof of the bounds
\[
\overline{e}_1\geq e_0-\lambda(R/\overline{I})=\lambda(\overline{I}/J)\geq 0.
\] 
Moreover, Proposition~\ref{prop2.3}$(a)$ is equivalent to the equality $\overline{e}_1=e_0-\lambda(R/\overline{I})$; 
it is also equivalent to the isomorphism $\overline{\mathcal G} \cong {\mathcal G}({\mathbb E})$; it is also equivalent to 
the fact that the reduction number of the integral closure filtration is at most $1$ (see \cite[Theorem~4.5]{H} and \cite[Corollary 6]{I}).}
\end{remark}

In the main theorem of this section we study the relation between an upper bound on $\overline{e}_1$ and 
the depth of $\overline{\mathcal G}$. That is, we investigate the depth property of 
$\overline{\mathcal G}$ whenever $\overline{e}_1 \leq e_0-\lambda(R/\overline{I})+1$. This is equivalent to assuming that the multiplicity 
of the Sally module $\overline{\mathcal S}$ is at most one. (See also \cite[Proposition~3.5]{V} and \cite[Corollary~3.7]{V} in the $I$-adic case.)

\begin{theorem}\label{theorem2.6}
Under the same assumptions as in {\rm Proposition~\ref{prop2.3}}, 
if \/ $\overline{e}_1  \leq e_0-\lambda(R/\overline{I})+1$ then ${\rm depth} \, \overline{\mathcal G} \geq d-1$.
\end{theorem}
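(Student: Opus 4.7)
By Proposition~\ref{prop2.3}(c) the hypothesis $\overline{e}_1\le e_0-\lambda(R/\overline{I})+1$ is equivalent to $\overline{s}_0\le 1$, i.e., the multiplicity of the Sally module $\overline{\mathcal S}$ is at most one. If $\overline{s}_0=0$ then Proposition~\ref{dimension_sally} (a nonzero $\overline{\mathcal S}$ must have dimension $d$) forces $\overline{\mathcal S}=0$, and Proposition~\ref{prop2.3}(a) gives that $\overline{\mathcal G}$ is Cohen--Macaulay. I may therefore assume $\overline{s}_0=1$, and by Proposition~\ref{prop2.3}(b) it suffices to prove that $\overline{\mathcal S}$ is Cohen--Macaulay of dimension~$d$, for then ${\rm depth}\,\overline{\mathcal G}\ge{\rm depth}\,\overline{\mathcal S}-1=d-1$.

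The key input to establishing the Cohen--Macaulayness of $\overline{\mathcal S}$ is Proposition~\ref{dimension_sally}: ${\rm Ass}_{\mathcal{R}(J)}(\overline{\mathcal S})=\{\mathfrak{m}\mathcal{R}(J)\}$. Since the residue field is infinite, I can choose a minimal generator $a\in J\setminus \mathfrak{m}J$; the element $at\in \mathcal{R}(J)_1$ has nonzero image in $\mathcal{R}(J)/\mathfrak{m}\mathcal{R}(J)$, so it avoids the unique associated prime of $\overline{\mathcal S}$ and is therefore a nonzerodivisor on $\overline{\mathcal S}$. Multiplying the Hilbert series by $(1-z)$ shows that $\overline{\mathcal S}/at\,\overline{\mathcal S}$ has dimension $d-1$ and still multiplicity one. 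For $d=1$ this already suffices, since ${\rm depth}\,\overline{\mathcal S}\ge 1$ then makes $\overline{\mathcal S}$ Cohen--Macaulay.

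For $d\ge 2$ the plan is to induct on $d$ via Sally descent. A careful choice of $a$ (a superficial element for the integral closure filtration) ensures that $R/(a)$ is again analytically unramified Cohen--Macaulay of dimension $d-1$, with the invariants $e_0$, $\overline{e}_1$, and $\lambda(R/\overline{I})$ all preserved, so that the hypothesis $\overline{s}_0\le 1$ descends. Identifying $\overline{\mathcal S}/at\,\overline{\mathcal S}$ with the Sally module of the normal filtration of $I/(a)$ over $R/(a)$ and invoking the inductive hypothesis yields that the quotient is Cohen--Macaulay of dimension $d-1$; coupling this with the regularity of $at$ on $\overline{\mathcal S}$ promotes $\overline{\mathcal S}$ itself to a Cohen--Macaulay module of dimension $d$.

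The principal obstacle is this last identification. Integral closure does not commute with quotienting by an element, so $(\overline{I^n}+(a))/(a)$ need not equal $\overline{(I/(a))^n}$ in general. The argument therefore hinges on choosing $a$ to be superficial for the normal filtration -- which should in particular yield $\overline{I^{n+1}}\cap (a)=a\,\overline{I^n}$ for $n\gg 0$ -- so that the descended Sally module takes the expected form and the numerical invariants governing $\overline{s}_0$ remain under control. Verifying this compatibility, and thereby closing the induction, is the technical core of the proof.
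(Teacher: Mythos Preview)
Your reduction to the case $\overline{s}_0=1$ and the observation that it suffices to show $\overline{\mathcal S}$ is Cohen--Macaulay are exactly right, and match the paper. But from that point on your argument is incomplete, and you acknowledge this yourself: the inductive ``Sally descent'' hinges on identifying $\overline{\mathcal S}/at\,\overline{\mathcal S}$ with the Sally module of the normal filtration of $I/(a)$, which you do not verify. This is not a minor technicality. The normal filtration is not multiplicative and does not, in general, specialize well modulo a superficial element; the failure of integral closure to commute with quotients is precisely why results that are routine for the $I$-adic filtration require new arguments here. (The paper's Remark following the theorem makes a closely related point: even the implication $\overline{I^2}=J\overline{I}\Rightarrow\overline{I^{n+1}}=J\overline{I^n}$ is unclear in dimension $>1$.) Without this identification your induction does not close, and so the proposal as written is not a proof.

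The paper avoids induction entirely. Having established that ${\rm Ass}_{\mathcal R(J)}(\overline{\mathcal S})=\{\mathfrak m\mathcal R(J)\}$, it views $\overline{\mathcal S}$ as a rank-one torsion-free module over the polynomial ring $B=\mathcal R(J)/\mathfrak m\mathcal R(J)$. The crucial step is to show directly that $\overline{\mathcal S}$ satisfies Serre's $S_2$ as a $B$-module: for a prime $P\subset B$ of height $\ge 2$, lift to $Q\subset\mathcal R(J)$ and depth-chase in the defining sequence $0\to\overline{I}\,\mathcal R(J)\to\overline{\mathcal R}_{\ge 1}[-1]\to\overline{\mathcal S}\to 0$, using that $\overline{I}\,\mathcal R(J)$ is maximal Cohen--Macaulay and that $\overline{\mathcal R}_{\ge 1}$ inherits $S_2$ from the normalization $\overline{\mathcal R}$. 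A rank-one $S_2$ module over a UFD is reflexive, hence free; thus $\overline{\mathcal S}$ is Cohen--Macaulay of depth $d$. This argument exploits the global $S_2$ property of $\overline{\mathcal R}$ --- a feature specific to the normal filtration --- and never needs to pass to a quotient ring.
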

\begin{proof}
%
By Proposition~\ref{prop2.3}$(c)$, our assumption on $\overline{e}_1$ implies that either $\overline{e}_1= e_0-\lambda(R/\overline{I})$ 
or $\overline{e}_1= e_0-\lambda(R/\overline{I})+1$. In the first case we have that the Sally module $\overline{\mathcal S}$ is zero and hence 
$\overline{\mathcal G}$ is Cohen-Macaulay by Proposition~\ref{prop2.3}$(a)$. Thus we are left to consider the second case.
From $\overline{e}_1 =e_0-\lambda(R/\overline{I})+1$ we obtain that  $\overline{\mathcal S}$ is a nonzero module of multiplicity one.
By Proposition~\ref{dimension_sally} we have that ${\rm Ass}_{{\mathcal R}(J)}(\overline{\mathcal S}) = \{ \, {\mathfrak m}{\mathcal R}(J) \, \}$. 
Thus $\overline{\mathcal S}$ is a torsion free $B$-module of rank one, where $B={\mathcal R}(J)/{\mathfrak m}{\mathcal R}(J)$ is a polynomial ring in 
$d$ variables over the residue field. 
We claim that $\overline{\mathcal S}$  is a reflexive $B$-module, hence it is free since $B$ is a UFD. 
In particular, ${\rm depth}\, \overline{\mathcal S}=d$. Hence, by 
Proposition~\ref{prop2.3}$(b)$, we conclude that ${\rm depth}\, \overline{\mathcal G} \geq d-1$.
Our claim is equivalent to the fact that $\overline{\mathcal S}$ has property $S_2$ of Serre as a $B$-module. 
As ${\rm Ass}_B(\overline{{\mathcal S}})=\{ 0 \}$ it suffices to show ${\rm depth} \, \overline{{\mathcal S}}_P \geq 2$ for each 
$P \in {\rm Spec}(B)$ with height at least two. Let $Q \in {\rm Spec}({\mathcal R}(J))$ be such that 
$P = Q/{\mathfrak m}{\mathcal R}(J)$. 

As we observed in the proof of Proposition~\ref{dimension_sally}, $\overline{I} {\mathcal R}(J)$ is maximal Cohen-Macaulay and 
$\overline{\mathcal R}_{\geq 1}$ has property $S_2$ of Serre, hence depth chasing in (\ref{eq_sally}) yields 
${\rm depth} \ \overline{{\mathcal S}}_Q \geq 2$. Thus ${\rm depth} \ \overline{{\mathcal S}}_P = {\rm depth} \ 
\overline{{\mathcal S}}_Q \geq 2$.
\end{proof}

%

\begin{remark}{\rm
Theorem~\ref{theorem2.6} would be a consequence of  \cite[Proposition 4.9]{MMV}. However the proof of  
\cite[Proposition 4.9]{MMV}  is not correct as it relies on \cite[Theorem 3.26]{Marley}  which is incorrectly stated. 
Indeed in dimension $d>1$ it is not clear that $\overline{I^2}=J\overline{I}$ implies that the normal filtration 
has reduction number at most one, that is $\overline{I^{n+1}}=J\overline{I^n}$ for all $n\geq 1$. }
\end{remark}

\section{On the vanishing of $\overline{e}_3$}

As we mentioned in the introduction Itoh showed that the vanishing of $\overline{e}_3$ is equivalent to the normal filtration of $I$  
having reduction number two, provided $R$ is Gorenstein and $\overline{I}={\mathfrak m}$  (see \cite[Theorem~3(2)]{I}). 
Again this implies that $\overline{\mathcal G}$ is Cohen-Macaulay.
We now generalize Itoh's result on the vanishing of $\overline{e}_3$ by considering arbitrary Cohen-Macaulay and imposing a 
condition on the type $t(R)$ of the ring.

\begin{proposition}\label{proposition3.1}
Let $(R, {\mathfrak m})$ be an analytically unramified, Cohen-Macaulay local ring of dimension $d \geq 3$, type $t(R)$ and infinite residue field. 
Let $I$ be a $R$-ideal with $\overline{I}={\mathfrak m}$ and 
$J$ a minimal reduction of $I$. Assume that $\overline{e}_3=0$. Then
 \[
 \lambda(\overline{I^{n+1}}/J^n\I) \le  t(R){ {n+d-2}\choose{d-1}}
 \]
for all $n\ge 1$. In particular, $\lambda(\overline{I^{2}}/J\I) \le  t (R)$.
\end{proposition}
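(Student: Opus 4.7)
The statement concerns the Hilbert function of the Sally module $\overline{\mathcal{S}}$ introduced in Section~2, whose degree-$n$ component is precisely $\overline{I^{n+1}}/J^n\overline{I}$. By Proposition~\ref{prop2.3}$(c)$, the hypothesis $\overline{e}_3=0$ translates to $\overline{s}_2=0$, the vanishing of the third normalized Hilbert coefficient of $\overline{\mathcal{S}}$ (a module of dimension $d$, by Proposition~\ref{dimension_sally}). The case $n=1$ of the desired inequality reads $\lambda(\overline{I^2}/J\overline{I})\leq t(R)$, and its shape $t(R)\binom{n+d-2}{d-1}=t(R)\dim_k B_{n-1}$, where $B=\mathcal{R}(J)/\mathfrak{m}\mathcal{R}(J)\cong k[T_1,\ldots,T_d]$, strongly suggests proving the estimate by producing a graded embedding $\overline{\mathcal{S}}\hookrightarrow B(-1)^{\oplus t(R)}$ and reading off Hilbert functions.

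My plan is to realize such an embedding in two steps. First, Proposition~\ref{dimension_sally} shows $\overline{\mathcal{S}}$ is $\mathfrak{m}\mathcal{R}(J)$-coprimary, hence torsion-free as a $B$-module of some rank $r$. Second, I would argue that $\overline{s}_2=0$ forces $\overline{\mathcal{S}}$ to be generated in degree one over $B$ and that $r\leq t(R)$. Writing $HS_{\overline{\mathcal{S}}}(z)=h(z)/(1-z)^d$ with $h(z)=\sum_{i\geq 1}h_i z^i$ (note $h_0=0$ since $\overline{\mathcal{S}}_0=0$), the identity $\overline{s}_2=\sum_{i\geq 2}\binom{i}{2}h_i=0$ together with a non-negativity property of the $h_i$ would force $h_i=0$ for $i\geq 2$. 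The Sally module would then be Cohen--Macaulay with $h$-polynomial $h_1z$, hence generated in degree one by $h_1=\lambda(\overline{\mathcal{S}}_1)=\lambda(\overline{I^2}/J\mathfrak{m})$ elements.

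The rank bound $\lambda(\overline{I^2}/J\mathfrak{m})\leq t(R)$ is where the hypothesis $\overline{I}=\mathfrak{m}$ enters: one identifies $\overline{I^2}/J\mathfrak{m}$ with a submodule of the socle of the artinian local ring $R/J$, which by definition of the type has $k$-dimension $t(R)$. The relevant inclusion $\mathfrak{m}\cdot\overline{I^2}\subseteq J\mathfrak{m}$ should follow from the $h$-vector constraint of the previous step together with $\mathfrak{m}\overline{I^2}=\overline{I}\cdot\overline{I^2}\subseteq\overline{I^3}$ and a comparison inside $R/J$. Putting everything together, $\lambda(\overline{\mathcal{S}}_n)\leq t(R)\dim_k B_{n-1}=t(R)\binom{n+d-2}{d-1}$.

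The main obstacle is the non-negativity of the $h$-coefficients of the normal Sally module, which is what makes $\overline{s}_2=0$ collapse $h(z)$ to a single monomial and delivers generation in degree one. This is the analogue for the normal filtration of Vasconcelos's results~\cite{V} in the $I$-adic setting, but, as already emphasized in Section~2, the classical arguments do not transfer directly because the normal filtration is not multiplicative. The right tool should be a careful depth-chase in the short exact sequences~(\ref{eq1}), (\ref{eq2}), and~(\ref{eq_sally}), using the $S_2$ property of $\overline{\mathcal{R}}$ and the maximal Cohen--Macaulayness of $\overline{I}\mathcal{R}(J)$ invoked already in Proposition~\ref{dimension_sally}; once this is in place the rest of the argument is formal.
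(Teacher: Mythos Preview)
Your plan hinges entirely on the non-negativity of the $h$-coefficients of $\overline{\mathcal S}$, and that is a genuine gap, not a technicality to be filled in by ``careful depth-chase''. The depth-chasing in (\ref{eq_sally}), (\ref{eq1}), (\ref{eq2}) gives at best that $\overline{\mathcal S}$ satisfies $S_2$ as a $B$-module (this is exactly what is used in Theorem~\ref{theorem2.6}); for $d\ge 3$ that is nowhere near Cohen--Macaulay, and without Cohen--Macaulayness there is no reason for the $h_i$ to be non-negative. So the implication ``$\overline{s}_2=\sum_{i\ge 2}\binom{i}{2}h_i=0\Rightarrow h_i=0$ for $i\ge 2$'' is unsupported, and with it collapses both your claim that $\overline{\mathcal S}$ is generated in degree one and the general-$n$ bound. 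Your step~3 is also circular as written: you need $\mathfrak m\,\overline{I^2}\subset J$ to view $\overline{I^2}/J\mathfrak m$ inside the socle of $R/J$, and you propose to extract this from ``the $h$-vector constraint of the previous step'', which is precisely the thing you have not proved.

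The paper's argument is completely different and avoids the Sally module entirely. The key external input you are missing is Itoh's theorem \cite[Theorem~3(1)]{I}: $\overline{e}_3=0$ forces $\overline{I^{n+2}}\subset J^n$ for every $n\ge 0$. Since $\overline I=\mathfrak m$, this gives
\[
\mathfrak m\,\overline{I^{n+1}}=\overline I\,\overline{I^{n+1}}\subset\overline{I^{n+2}}\subset J^n,
\]
so $\overline{I^{n+1}}\subset J^n:\mathfrak m$. Using $J^n\cap\overline{I^{n+1}}=J^n\overline I$ (\cite[Theorem~1]{It} or \cite[Theorem~4.7]{H}) one then has
\[
\lambda(\overline{I^{n+1}}/J^n\overline I)=\lambda\bigl((\overline{I^{n+1}}+J^n)/J^n\bigr)\le \lambda\bigl((J^n:\mathfrak m)/J^n\bigr),
\]
and the right-hand side is the socle length of $R/J^n$, which equals $t(R)\binom{n+d-2}{d-1}$ by the Eagon--Northcott resolution. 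This is a two-line computation once Itoh's containment is in hand; your route tries to reprove an equivalent of that containment from scratch via an $h$-vector argument that does not go through.
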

\begin{proof}
By \cite[Theorem 3(1)]{I} the assumption $\overline{e}_3=0$ yields $\overline{I^{n+2}}\subset J^n$ for all $n \ge 0$. By assumption we have that $\I=\m$, hence
\[
{\m} \overline{I^{n+1}}=\overline{I} \, \overline{I^{n+1}}\subset \overline{I^{n+2}} \subset J^n.
\]
This implies that $\overline{I^{n+1}} \subset J^n : \m$, thus
\begin{eqnarray*}
\lambda(\overline{I^{n+1}}/J^n\I) &=&\lambda(\overline{I^{n+1}}/(J^n \cap \overline{I^{n+1}}))  = \lambda(\overline{I^{n+1}}+J^n/J^n)  \\
&\le& \lambda(J^n \colon \m/J^n) = t(R){ {n+d-2}\choose{d-1}}.
\end{eqnarray*}
We observe that in the first equality we used \cite[Theorem 1]{It} or \cite[Theorem 4.7 and Appendix]{H} whereas
the last equality holds because  $\lambda(J^n \colon \m/J^n)$ is the dimension of the socle of the ring $R/J^n$, which can be computed 
from the Eagon-Northcott resolution of $R/J^n$.

The second inequality asserted in the theorem follows by setting $n=1$ in the general inequality.
\end{proof}

In the next result we present both a lower and upper bound on $\overline{e}_1$  under the running assumptions of this 
section, namely, $\overline{I}={\mathfrak m}$ and $\overline{e}_3=0$. The lower bound appears already in \cite[Theorem~2(1)]{I}. 
We also establish a condition that assures that the upper bound is tight.

\begin{proposition}\label{proposition3.2}
Let $(R, {\mathfrak m})$ be an analytically unramified, Cohen-Macaulay local ring of dimension $d \geq 3$, type $t(R)$ 
and infinite residue field. 
Let $I$ be a $R$-ideal with $\overline{I}={\mathfrak m}$ and $J$ a minimal reduction of $I$.
Assume that  $\overline{e}_3=0$. Then
\[
e_0-1+\lambda(\overline{I^{2}}/J\I) \leq \overline{e}_1 \leq e_0-1+ t(R).
\]
Moreover, if $t(R)\not= \lambda(\overline{I^{2}}/J\I)$ then $ \overline{e}_1 < e_0-1+ t(R)$.
\end{proposition}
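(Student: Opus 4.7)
The plan is to handle the three inequalities in turn. The lower bound $\overline{e}_1 \geq e_0-1+\lambda(\overline{I^2}/J\overline{I})$ is precisely Itoh's \cite[Theorem~2(1)]{I} specialized to our setting (using $\lambda(R/\overline{I})=1$, since $\overline{I}=\mathfrak{m}$), so I will simply cite it. For the upper bound, I combine Proposition~\ref{prop2.3}$(c)$, which identifies the normalized multiplicity of the Sally module as $\overline{s}_0 = \overline{e}_1-e_0+1$, with Proposition~\ref{proposition3.1}. Since $\overline{\mathcal{S}}$ has dimension $d$ by Proposition~\ref{dimension_sally}, its Hilbert function $H_{\overline{\mathcal{S}}}(n) = \lambda(\overline{I^{n+1}}/J^n\overline{I})$ agrees asymptotically with a polynomial of degree $d-1$ whose leading coefficient is $\overline{s}_0/(d-1)!$. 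The pointwise bound $H_{\overline{\mathcal{S}}}(n) \leq t(R)\binom{n+d-2}{d-1}$ from Proposition~\ref{proposition3.1} then forces $\overline{s}_0 \leq t(R)$ upon comparing leading coefficients, which rearranges to $\overline{e}_1 \leq e_0-1+t(R)$.

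For the strict inequality, I argue by contrapositive: assume $\overline{e}_1 = e_0-1+t(R)$, hence $\overline{s}_0 = t(R)$, and aim to conclude $\lambda(\overline{I^2}/J\overline{I}) = t(R)$. The inclusions $\overline{I^{n+1}} \subseteq J^n:\mathfrak{m}$ together with Itoh's equality $\overline{I^{n+1}} \cap J^n = J^n\overline{I}$ (both invoked in the proof of Proposition~\ref{proposition3.1}) assemble into an injection of graded $B$-modules
\[
\overline{\mathcal{S}} \;\hookrightarrow\; F \;:=\; \bigoplus_{n \geq 1}(J^n:\mathfrak{m})/J^n,
\]
where $B = \mathcal{R}(J)/\mathfrak{m}\mathcal{R}(J)$ is a polynomial ring in $d$ variables over the residue field. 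The Eagon-Northcott computation already invoked in Proposition~\ref{proposition3.1} shows that $F$ has Hilbert function $t(R)\binom{n+d-2}{d-1}$ and hence multiplicity $t(R)$, while the degree-one component $F_1 = (J:\mathfrak{m})/J$ is just the socle of $R/J$ and so has dimension exactly $t(R)$. Since $\overline{\mathcal{S}}$ and $F$ share the same multiplicity, the quotient $F/\overline{\mathcal{S}}$ is torsion, so $\overline{\mathcal{S}}$ has full rank $t(R)$ inside $F$.

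The crux is then to deduce $\overline{\mathcal{S}}_1 = F_1$, since that would give $\lambda(\overline{I^2}/J\overline{I}) = \dim_k \overline{\mathcal{S}}_1 = t(R)$ and close the argument. This is the main obstacle, because a full-rank graded $B$-submodule of $F$ need not equal $F$ in degree one abstractly (one can manufacture full-rank submodules with strictly smaller degree-one component). The resolution must exploit structural features of the Sally module specific to the normal filtration — most plausibly the fact that, in this extremal setting, $\overline{\mathcal{S}}$ is generated in degree one as a $B$-module. If so, $\overline{\mathcal{S}} = B \cdot \overline{\mathcal{S}}_1$ expresses $\overline{\mathcal{S}}$ as a quotient of $B(-1)^{\dim_k \overline{\mathcal{S}}_1}$, forcing $t(R) = \mathrm{mult}(\overline{\mathcal{S}}) \leq \dim_k \overline{\mathcal{S}}_1 \leq \dim_k F_1 = t(R)$ and thus equality throughout. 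The generation-in-degree-one statement is where the hypothesis $\overline{e}_3 = 0$ is expected to enter decisively, by way of a reduction-number-type control on the normal filtration analogous to the one implicit in Proposition~\ref{proposition3.1}.
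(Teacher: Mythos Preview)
Your treatment of the two inequalities is fine and essentially matches the paper's: both compare $\lambda(R/\overline{I^{n+1}})$ (equivalently, $\lambda(\overline{I^{n+1}}/J^n\overline{I})$) against the polynomial $t(R)\binom{n+d-2}{d-1}$ coming from Proposition~\ref{proposition3.1} and read off the leading coefficient. Phrasing this through $\overline{s}_0$ is cosmetic.

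The gap is in the strict-inequality step. You set up the graded embedding $\overline{\mathcal S}\hookrightarrow F=\bigoplus_{n\ge 1}(J^n:\mathfrak m)/J^n$ correctly and observe that $\overline{s}_0=t(R)$ forces $F/\overline{\mathcal S}$ to be torsion; but, as you yourself note, this does \emph{not} force $\overline{\mathcal S}_1=F_1$. You then appeal to ``generation in degree one'' of $\overline{\mathcal S}$, which you do not prove. This is not a minor loose end: generation of $\overline{\mathcal S}$ in degree one over $B$ is literally the statement $\overline{I^{n+1}}=J^{n-1}\overline{I^2}$ for all $n\ge 1$, i.e.\ reduction number two for the normal filtration. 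That is precisely the conclusion of Itoh's Theorem~2(2), not something one can assume on the way to it; invoking it here is circular. Nothing in $\overline e_3=0$ alone gives generation in degree one --- Proposition~\ref{proposition3.1} only yields $\overline{I^{n+2}}\subseteq J^n$.

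The paper closes this gap differently. It does not pass to the Sally module at all; it works directly with the pointwise inequality
\[
\lambda(R/\overline{I^{n+1}})\ \ge\ e_0\binom{n+d}{d}-\bigl[\lambda(\overline I/J)+t(R)\bigr]\binom{n+d-1}{d-1}+t(R)\binom{n+d-2}{d-2},
\]
and argues by dichotomy. If the inequality is strict for some large $n$, comparison with the Hilbert polynomial immediately gives $\overline e_1<\lambda(\overline I/J)+t(R)$. If instead equality holds for all large $n$, then \emph{two} coefficients are pinned down simultaneously: $\overline e_1=\lambda(\overline I/J)+t(R)$ \emph{and} $\overline e_2=t(R)$. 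The second identity is the missing ingredient in your argument: it yields $\overline e_2=\overline e_1-\lambda(\overline I/J)$, which by \cite[Theorem~2(2)]{I} forces $\overline{I^{n+1}}=J^{n-1}\overline{I^2}$; then equality in Itoh's bound \cite[Proposition~10]{It} gives $\overline e_2=\lambda(\overline{I^2}/J\overline I)$, hence $t(R)=\lambda(\overline{I^2}/J\overline I)$. The point is that the equality case delivers $\overline e_2$ for free, and it is $\overline e_2$ (not $\overline e_3$) that triggers the reduction-number-two statement you were hoping for.
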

\begin{proof}
Notice that $e_0-1=\lambda(\overline{I}/J)$, since $\overline{I}={\mathfrak m}$.
By \cite[Proposition 10]{It} we have for any $n \geq 0$
\begin{equation} \label{equat1}
\lambda(R/\overline{I^{n+1}}) \leq e_0{n+d \choose d} - [\lambda(\overline{I}/J)+\lambda(\overline{I^2}/J\overline{I})] {n+d-1 \choose d-1} +\lambda(\overline{I^2}/J\overline{I}){n+d-2 \choose d-2}.  
\end{equation}
Now
\begin{eqnarray*}
\lambda(R/\overline{I^{n+1}})
& =  & \lambda(R/J^{n+1})-\lambda(J^n\overline{I}/J^{n+1}) - \lambda(\overline{I^{n+1}}/J^n\overline{I}) \\
& =  & \lambda(R/J^{n+1}) - \lambda(J^n/J^{n+1}) + \lambda(J^n/J^n\overline{I}) - \lambda(\overline{I^{n+1}}/J^n\overline{I}) \\
& =  & e_0 {n+d \choose d} -e_0 {n+d-1 \choose d-1} + \lambda(R/\overline{I}) {n+d-1 \choose d-1} - \lambda(\overline{I^{n+1}}/J^n \overline{I})
\end{eqnarray*}
where $\lambda(J^n/J^n\overline{I}) = \lambda(R/\overline{I}) \displaystyle {n+d-1 \choose d-1}$ follows since
\begin{align*}
J^n/J^n \overline{I} & \cong  J^n/J^{n+1} \otimes R/\overline{I}  \cong  [{\rm gr}_J(R)]_n \otimes R/\overline{I} \cong   [{\rm gr}_J(R) \otimes R/\overline{I}]_n \\
& \cong [R/J[T_1, \ldots, T_d] \otimes R/\overline{I}]_n \cong  [R/\overline{I}[T_1, \ldots, T_d]]_n.
\end{align*}
By Proposition~\ref{proposition3.1} it follows that for any $n \geq 0$ we have that
\begin{eqnarray} \label{equat2}
\begin{array}{rcl}
\lambda(R/\overline{I^{n+1}}) & \geq & e_0 \displaystyle {n+d \choose d} -e_0 {n+d-1 \choose d-1} + \lambda(R/\overline{I}) {n+d-1\choose d-1} - t(R) {n+d-2 \choose d-1} \\
& = & e_0 \displaystyle{n+d \choose d} - [\lambda(\overline{I}/J)+t(R)] {n+d-1 \choose d-1} + t(R) {n+d-2 \choose d-2}.
\end{array}
\end{eqnarray}
Note that $\displaystyle{n+d-2 \choose d-1} = {n+d-1 \choose d-1} - {n+d-2 \choose d-2}$ and recall that for all $n \gg 0$ we have
\begin{eqnarray} \label{equat3}
\lambda(R/\overline{I^{n+1}})  = e_0 {n+d \choose d} - \overline{e}_1{n+d-1 \choose d-1}+\overline{e}_2{n+d-2 \choose d-2} + {\rm \ lower \ terms}. 
\end{eqnarray}
Comparing (\ref{equat1}), (\ref{equat2}) and (\ref{equat3}) we obtain immediately that
\[
\lambda(\overline{I}/J)+\lambda(\overline{I^2}/J\overline{I}) \leq \overline{e}_1 \leq \lambda(\overline{I}/J) + t(R).
\]
Now assume that $t(R) \not= \lambda(\overline{I^2}/J\overline{I})$. If in (\ref{equat2}) the inequality is strict for at least one $n \gg 0$,
then comparing (\ref{equat2}) and (\ref{equat3}) we obtain the desired conclusion, that is
\[
\overline{e}_1 < \lambda(\overline{I}/J) + t(R).
\]
Otherwise in (\ref{equat2}) the equality holds for all $n \gg 0$. Again comparing (\ref{equat2}) and (\ref{equat3}) we obtain
\[
\overline{e}_1 = \lambda(\overline{I}/J)+t(R) \qquad \overline{e}_2 = t(R).
\]
Hence $\overline{e}_2=\overline{e}_1 - \lambda(\overline{I}/J)$, which implies
$\overline{I^{n+1}} = J^{n-1} \overline{I^2}$ for all $n \geq 1$ by \cite[Theorem~2(2)]{I}. 
Now by \cite[Proposition 10]{It} we have that equality holds in (\ref{equat1}), hence
\[
t(R) = \overline{e}_2 = \lambda(\overline{I^2}/J \overline{I}),
\]
which is a contradiction.
\end{proof}

In the following theorem we analyze the case when  $\lambda(\overline{I^{2}}/J\I) $ is maximal or almost maximal.
In accordance to the classical philosophy we prove that if the bound is attained then the associated graded ring of the
normal filtration is Cohen-Macaulay and furthermore the normal filtration has reduction number two.

\begin{theorem}\label{theorem3.3}
Let $(R, {\mathfrak m})$ be an analytically unramified, Cohen-Macaulay local ring of dimension $d \geq 3$, type $t(R)$ and infinite residue field. 
Let $I$ be an $R$-ideal with $\overline{I}={\mathfrak m}$ and $J$ a minimal reduction of $I$. Assume that
 $\overline{e}_3=0$ and  $\lambda(\overline{I^{2}}/J\I) \ge  t(R) -1$. Then $\overline{{\mathcal G}}$ is Cohen-Macaulay and 
 $\overline{I^{n+1}}=J^{n-1}\overline{I^2}$ for all $n \geq 1$.
\end{theorem}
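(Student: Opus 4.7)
The plan is to dispatch the two allowable values of $\lambda(\overline{I^{2}}/J\overline{I})$ separately. By Proposition~\ref{proposition3.1} this quantity is at most $t(R)$, so combined with the hypothesis $\lambda(\overline{I^{2}}/J\overline{I}) \geq t(R)-1$ it must equal either $t(R)$ or $t(R)-1$. In each case the objective is to establish the equality $\overline{e}_2 = \overline{e}_1 - \lambda(\overline{I}/J)$; once this is in hand, Itoh's \cite[Theorem~2(2)]{I} yields simultaneously the Cohen-Macaulayness of $\overline{\mathcal{G}}$ and the reduction number two statement $\overline{I^{n+1}} = J^{n-1}\overline{I^{2}}$ for all $n \geq 1$.

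When $\lambda(\overline{I^{2}}/J\overline{I}) = t(R)$, the right-hand sides of the inequalities (\ref{equat1}) and (\ref{equat2}) from the proof of Proposition~\ref{proposition3.2} coincide as polynomials in $n$. Since both inequalities hold for every $n \geq 0$, this forces $\lambda(R/\overline{I^{n+1}})$ to equal the common explicit polynomial identically. Reading off its coefficients gives $\overline{e}_1 = \lambda(\overline{I}/J) + t(R)$ and $\overline{e}_2 = t(R)$, whence $\overline{e}_2 = \overline{e}_1 - \lambda(\overline{I}/J)$.

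When $\lambda(\overline{I^{2}}/J\overline{I}) = t(R)-1$, the strict upper bound of Proposition~\ref{proposition3.2} (applicable since $t(R) \neq \lambda(\overline{I^{2}}/J\overline{I})$) together with the lower bound there pins down $\overline{e}_1 = \lambda(\overline{I}/J) + \lambda(\overline{I^{2}}/J\overline{I})$. Itoh's inequality \cite[Theorem~2(2)]{I} then gives $\overline{e}_2 \geq \overline{e}_1 - \lambda(\overline{I}/J) = \lambda(\overline{I^{2}}/J\overline{I})$. For the reverse inequality I subtract the Hilbert polynomial of $\lambda(R/\overline{I^{n+1}})$ from the right-hand side of (\ref{equat1}): the coefficient of ${n+d-1 \choose d-1}$ vanishes by the value of $\overline{e}_1$ just found, and $\overline{e}_3=0$ kills the ${n+d-3 \choose d-3}$ contribution, so the leading coefficient of the resulting non-negative polynomial sits at degree $d-2$ and equals $\lambda(\overline{I^{2}}/J\overline{I}) - \overline{e}_2$. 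Non-negativity of this polynomial for $n \gg 0$ forces this coefficient to be non-negative, so $\overline{e}_2 \leq \lambda(\overline{I^{2}}/J\overline{I})$; combined with Itoh's lower bound this yields equality, and we conclude as in the first case.

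The only step requiring real work is the upper bound on $\overline{e}_2$ in the second case. The crux is a double cancellation, at degree $d-1$ (from the exact value of $\overline{e}_1$) and at degree $d-3$ (from $\overline{e}_3=0$), which isolates the degree $d-2$ coefficient in the polynomial obtained by subtracting $\lambda(R/\overline{I^{n+1}})$ from the right-hand side of (\ref{equat1}). Without both cancellations the polynomial-comparison argument would not directly expose $\overline{e}_2$.
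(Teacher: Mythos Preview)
Your proof is correct, and the overall architecture matches the paper's: split into the two admissible values of $\lambda(\overline{I^2}/J\overline{I})$, and in each case pin down $\overline{e}_1 = \lambda(\overline{I}/J) + \lambda(\overline{I^2}/J\overline{I})$. The difference is what happens next. The paper stops here and invokes the equality case of Itoh's \cite[Theorem~2(1)]{I}: equality in the lower bound on $\overline{e}_1$ already forces $\overline{I^{n+1}} = J^{n-1}\overline{I^2}$ for all $n \geq 1$, and then Cohen--Macaulayness follows from Valabrega--Valla together with $\overline{I^2} \cap J = J\overline{I}$. You instead route everything through \cite[Theorem~2(2)]{I}, which requires you to separately establish $\overline{e}_2 = \overline{e}_1 - \lambda(\overline{I}/J)$. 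In Case~1 this comes for free from the squeeze between (\ref{equat1}) and (\ref{equat2}); in Case~2 it costs you the additional polynomial-comparison argument. Both approaches are valid, but the paper's is shorter precisely because Theorem~2(1) already packages the implication you need.

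One small correction to your final paragraph: the vanishing of $\overline{e}_3$ is not needed for your upper bound on $\overline{e}_2$ in Case~2. Once the coefficient of $\binom{n+d-1}{d-1}$ cancels (from the value of $\overline{e}_1$), the leading term of the difference is automatically $\bigl[\lambda(\overline{I^2}/J\overline{I}) - \overline{e}_2\bigr]\binom{n+d-2}{d-2}$; the $\overline{e}_3$ contribution sits at degree $d-3$, strictly below, and has no bearing on the sign of the leading coefficient. So the ``double cancellation'' is really just a single one; the hypothesis $\overline{e}_3=0$ enters only upstream, through Propositions~\ref{proposition3.1} and~\ref{proposition3.2}.
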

\begin{proof}
Using Proposition~\ref{proposition3.1} and our assumption we have that
\[
\lambda(\overline{I^2}/J\overline{I}) \leq t(R) \leq \lambda(\overline{I^2}/J\overline{I}) +1.
\]
If $t(R) = \lambda(\overline{I^2}/J\overline{I})$, then by Proposition~\ref{proposition3.2} we have
$\overline{e}_1 = \lambda(\overline{I}/J) + \lambda(\overline{I^2}/J\overline{I})$.
Hence by \cite[Theorem 2.(1)]{I}, we have $\overline{I^{n+1}}=J^{n-1}\overline{I^2}$
for every $n \geq 1$.

If $t(R) = \lambda(\overline{I^2}/J\overline{I})+1$, then again by Proposition~\ref{proposition3.2} we have
\[
\lambda(\overline{I}/J) + \lambda(\overline{I^2}/J\overline{I}) \leq \overline{e}_1 <  \lambda(\overline{I}/J) + \lambda(\overline{I^2}/J\overline{I}) + 1.
\]
Thus $\overline{e}_1=\lambda(\overline{I}/J)+\lambda(\overline{I^2}/J\overline{I})$ and we conclude as before.

Notice that if $\overline{I^{n+1}}=J^{n-1}\overline{I^2}$, then $\overline{{\mathcal G}}$ is Cohen-Macaulay by the Valabrega-Valla criterion 
(see \cite[Theorem 1.1]{RV}), since $\overline{I^2} \cap J = J \overline{I}$ by \cite[Theorem 1]{It} or \cite[Theorem 4.7 and Appendix]{H}.
\end{proof}

If we strengthen the assumptions in Theorem~\ref{theorem2.6} by adding the vanishing of $\overline{e}_3$ we obtain the 
Cohen-Macaulayness of $\overline{{\mathcal G}}$. Furthermore, the normal filtration of $I$ has reduction number at most $2$.

\begin{proposition}\label{proposition3.4}
Let $(R, {\mathfrak m})$ be an analytically unramified, Cohen-Macaulay local ring of dimension $d \geq 3$ 
and infinite residue field. Let $I$ be an ${\mathfrak m}$-primary ideal and $J$ a minimal reduction of $I$. 
Assume $\overline{e}_1 = e_0 - \lambda(R/\overline{I})+1$ and $\overline{e}_3=0$. Then $\overline{{\mathcal G}}$ is 
Cohen-Macaulay and the normal filtration of $I$ has reduction number at most $2$.
\end{proposition}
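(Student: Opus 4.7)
The strategy is to upgrade the depth conclusion of Theorem~\ref{theorem2.6} to full Cohen--Macaulayness once the extra assumption $\overline{e}_3=0$ is in force, by pinning down exactly which graded module $\overline{\mathcal S}$ is. Recall from the proof of Theorem~\ref{theorem2.6} that the hypothesis $\overline{e}_1=e_0-\lambda(R/\overline{I})+1$ already guarantees that $\overline{\mathcal S}$ is a rank-one free graded module over the polynomial ring $B=\mathcal R(J)/\mathfrak m\mathcal R(J)\cong\kappa[T_1,\ldots,T_d]$. Since $\overline{\mathcal S}_0=0$ by construction, this forces $\overline{\mathcal S}\cong B[-a]$ as graded $B$-modules for some integer $a\ge 1$; the proof then amounts to showing $a=1$ and reading off the structural consequences.

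To pin down $a$, I will run a Hilbert-series calculation. The series of $\overline{\mathcal S}$ is $z^a/(1-z)^d$, so its $h$-polynomial is simply $h(z)=z^a$; the standard formula $\overline{s}_i=h^{(i)}(1)/i!$ then gives $\overline{s}_i=\binom{a}{i}$ for every $0\le i\le d-1$. Because $d\ge 3$, Proposition~\ref{prop2.3}$(c)$ identifies $\overline{s}_2$ with $\overline{e}_3$, so the assumption $\overline{e}_3=0$ forces $\binom{a}{2}=0$, and combined with $a\ge 1$ this yields $a=1$.

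The isomorphism $\overline{\mathcal S}\cong B[-1]$ says that $\overline{\mathcal S}$ is generated as a $B$-module by its degree-one component, so multiplication by (the image of) $J$ maps $\overline{\mathcal S}_n$ onto $\overline{\mathcal S}_{n+1}$ for every $n\ge 1$. Unravelling this surjectivity through the definition $\overline{\mathcal S}_n=\overline{I^{n+1}}/J^n\overline{I}$ gives $\overline{I^{n+2}}=J\overline{I^{n+1}}+J^{n+1}\overline{I}$, and because $J^{n+1}\overline{I}\subseteq J\overline{I^{n+1}}$ the second summand is redundant, leaving $\overline{I^{n+2}}=J\overline{I^{n+1}}$ for all $n\ge 1$. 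A trivial induction then produces $\overline{I^{n+1}}=J^{n-1}\overline{I^2}$ for every $n\ge 1$, i.e., the normal filtration has reduction number at most two.

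Cohen--Macaulayness of $\overline{\mathcal G}$ will then follow from the Valabrega--Valla criterion, exactly as at the end of the proof of Theorem~\ref{theorem3.3}: one checks $J\cap\overline{I^{n+1}}=J\overline{I^n}$ for every $n\ge 1$. The case $n=1$ is the known identity $J\cap\overline{I^2}=J\overline{I}$ of Itoh and Huneke, and for $n\ge 2$ the inclusion $\overline{I^{n+1}}=J^{n-1}\overline{I^2}\subseteq J$ collapses the intersection to $\overline{I^{n+1}}=J\overline{I^n}$. The most delicate step, I expect, will be the Hilbert-series bookkeeping in the second paragraph, which requires carefully matching the $h$-vector of $\overline{\mathcal S}$ to its normalized Hilbert coefficients; once the identification $\overline{\mathcal S}\cong B[-1]$ is in hand, the remaining steps are either direct consequences of that isomorphism or citations to the results already invoked in Section~3.
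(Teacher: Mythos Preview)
Your proof is correct and takes a genuinely different route from the paper's.

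The paper argues via the Huckaba--Marley inequality \cite[Corollary~4.8]{HM}: from $\overline{e}_1=\lambda(\overline{I}/J)+1$ one gets $\sum_{n\ge 1}\lambda(\overline{I^{n+1}}/J\cap\overline{I^{n+1}})\le 1$, and the equality case of that same result already gives Cohen--Macaulayness and reduction number $\le 2$. In the remaining case $\overline{I^2}=J\overline{I}$, the paper invokes Theorem~\ref{theorem2.6} to obtain ${\rm depth}\,\overline{\mathcal G}\ge d-1$, and then applies the Huckaba--Marley formula $\overline{e}_3=\sum_{j\ge 2}\binom{j}{2}\lambda(\overline{I^{j+1}}/J\overline{I^j})$ (valid in depth $\ge d-1$) to force $\overline{I^{j+1}}=J\overline{I^j}$ for all $j\ge 1$.

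Your argument instead stays entirely inside the Sally-module framework of Section~2: you exploit the structural conclusion of the proof of Theorem~\ref{theorem2.6} that $\overline{\mathcal S}$ is graded free of rank one over $B$, identify the shift via $\overline{s}_2=\overline{e}_3=0$, and read off the reduction number directly from $\overline{\mathcal S}\cong B[-1]$. This avoids both citations to \cite{HM} and the case split. What you lose is that the paper's route makes visible exactly where the depth bound of Theorem~\ref{theorem2.6} and the formula for $\overline{e}_3$ each enter; what you gain is a cleaner, more self-contained argument that shows the extra hypothesis $\overline{e}_3=0$ acts precisely by pinning down the generating degree of the Sally module.
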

\begin{proof}
We have that
\[
\lambda(\overline{I}/J) + 1 = \overline{e}_1 \geq \lambda(\overline{I}/J) + \sum_{n \geq 1} \lambda(\overline{I^{n+1}}/J \cap \overline{I^{n+1}}),
\]
where the equality holds by assumption and the inequality is given by \cite[Corollary 4.8]{HM}. 
Hence $\displaystyle\sum_{n \geq 1} \lambda(\overline{I^{n+1}}/J \cap\overline{ I^{n+1}}) \leq 1$. 
Again by \cite[Corollary 4.8]{HM} equality holds if and only if 
$\overline{{\mathcal G}}$ is Cohen-Macaulay and the reduction number of the normal filtration is at most 2. 

Thus we may assume that $\displaystyle\sum_{n \geq 1} \lambda(\overline{I^{n+1}}/J \cap\overline{ I^{n+1}}) =0$. In particular,
$\overline{I^2} = J \cap \overline{I^2}=J \overline{I}$ by \cite[Theorem 1]{It} or \cite[Theorem 4.7 and Appendix]{H}. 

By Theorem~\ref{theorem2.6} ${\rm depth} \, \overline{{\mathcal G}} \geq d-1$. According to \cite[Proposizione~4.6]{HM} we have
\[
\overline{e}_3 = \sum_{j \geq 2} { j \choose 2} \lambda(\overline{I^{j+1}}/J\overline{I^j}). 
\]
As $\overline{e}_3=0$, we obtain that $\overline{I^{j+1}}=J\overline{I^j}$  for all $j \geq 1$. 
Thus $\overline{{\mathcal G}}$ is Cohen-Macaulay by the Valabrega-Valla criterion (see \cite[Theorem 1.1]{RV}).
\end{proof}

\begin{remark}{\rm 
Notice that the second case in the proof of Proposition~\ref{proposition3.4} cannot happen. In fact one would have that
$\overline{e}_1 > \lambda(\overline{I}/J) + \sum_{n \geq 1} \lambda(\overline{I^{n+1}}/J \cap \overline{I^{n+1}})$ 
and $\overline{{\mathcal G}}$ Cohen-Macaulay, thus contradicting \cite[Corollary 4.8]{HM}. }
\end{remark}

\begin{theorem}\label{theorem3.5}
Let $(R, {\mathfrak m})$ be an analytically unramified, Cohen-Macaulay local ring of dimension $d \geq 3$,
type $t(R)\leq 2$ and infinite residue field. Let $I$ be an $R$-ideal with $\overline{I}={\mathfrak m}$ and $J$ a minimal reduction of $I$.
Assume that  $\overline{e}_3=0$. Then 
\begin{itemize}
\item[$(a)$]
$\overline{{\mathcal G}}$ is Cohen-Macaulay$;$

\item[$(b)$] 
${\mathcal G}({\mathfrak m})$ is Cohen-Macaulay, except in the case \ $t(R)=\lambda(\overline{I^2}/J{\mathfrak m})=\mu({\mathfrak m})-d=2$ \
and 
$\lambda({\mathfrak m}^2/J{\mathfrak m}) =1$. In this latter situation, though, ${\rm depth} \, {\mathcal G}({\mathfrak m}) \geq d-1$. 
\end{itemize}
\end{theorem}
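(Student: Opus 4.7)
Proposition~\ref{proposition3.1} gives $\lambda(\overline{I^2}/J\overline{I}) \leq t(R) \leq 2$. When $\lambda(\overline{I^2}/J\overline{I}) \geq t(R) - 1$, Theorem~\ref{theorem3.3} applies directly and delivers Cohen-Macaulayness of $\overline{{\mathcal G}}$ together with $\overline{I^{n+1}} = J^{n-1}\overline{I^2}$. Otherwise $t(R) = 2$ and $\overline{I^2} = J\overline{I}$; the strict upper bound in Proposition~\ref{proposition3.2}, applicable because $t(R) \neq \lambda(\overline{I^2}/J\overline{I})$, together with its lower bound traps $\overline{e}_1 \in \{e_0-1,\, e_0\}$. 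Proposition~\ref{prop2.3}(a) handles the first value and Proposition~\ref{proposition3.4} handles the second (using $\lambda(R/\overline{I}) = 1$), both yielding Cohen-Macaulayness of $\overline{{\mathcal G}}$.

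\textbf{Part (b).} By part~(a), $\overline{{\mathcal G}}$ is Cohen-Macaulay and the normal filtration has reduction number at most~$2$, so $\overline{{\mathfrak m}^{n+1}} = J^{n-1}\overline{{\mathfrak m}^2}$ and, by Valabrega-Valla applied to $\overline{{\mathcal G}}$, $J \cap \overline{{\mathfrak m}^{n+1}} = J\overline{{\mathfrak m}^n}$ for $n \geq 1$; in particular $J \cap {\mathfrak m}^2 = J{\mathfrak m}$. I split into three sub-cases along the chain $J{\mathfrak m} \subseteq {\mathfrak m}^2 \subseteq \overline{{\mathfrak m}^2}$. If ${\mathfrak m}^2 = J{\mathfrak m}$, the ${\mathfrak m}$-adic reduction number is~$1$ and ${\mathcal G}({\mathfrak m})$ is Cohen-Macaulay. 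If ${\mathfrak m}^2 = \overline{{\mathfrak m}^2}$, the chain $J^{n-1}{\mathfrak m}^2 \subseteq {\mathfrak m}^{n+1} \subseteq \overline{{\mathfrak m}^{n+1}} = J^{n-1}\overline{{\mathfrak m}^2} = J^{n-1}{\mathfrak m}^2$ forces equalities throughout, so ${\mathcal G}({\mathfrak m}) = \overline{{\mathcal G}}$ is Cohen-Macaulay. Otherwise both $\lambda({\mathfrak m}^2/J{\mathfrak m})$ and $\lambda(\overline{{\mathfrak m}^2}/{\mathfrak m}^2)$ are positive, and their sum equals $\lambda(\overline{{\mathfrak m}^2}/J{\mathfrak m}) \leq t(R) \leq 2$, forcing $\lambda({\mathfrak m}^2/J{\mathfrak m}) = \lambda(\overline{{\mathfrak m}^2}/{\mathfrak m}^2) = 1$ and $t(R) = \lambda(\overline{{\mathfrak m}^2}/J{\mathfrak m}) = 2$.

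In this remaining sub-case, write ${\mathfrak m}^2 = J{\mathfrak m} + (x)$; induction on $n$ gives ${\mathfrak m}^{n+1} = J{\mathfrak m}^n + x{\mathfrak m}^{n-1}$, so by the modular law $J \cap {\mathfrak m}^{n+1} = J{\mathfrak m}^n + (J \cap x{\mathfrak m}^{n-1})$. Thus ${\mathcal G}({\mathfrak m})$ is Cohen-Macaulay exactly when $J \cap x{\mathfrak m}^{n-1} \subseteq J{\mathfrak m}^n$ for every $n \geq 1$. The length identity $\mu({\mathfrak m}) - d = e_0 - 1 - \lambda({\mathfrak m}^2/J{\mathfrak m}) = e_0 - 2$ places us in an almost-minimal-multiplicity situation, so Sally's classical theorem yields ${\rm depth}\,{\mathcal G}({\mathfrak m}) \geq d - 1$. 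Observe further that $t(R) = 2$ forces $\mu({\mathfrak m}) - d \geq 2$: otherwise ${\mathfrak m} = J + (u)$ and $R/J$ would be a principal Artinian ring with $1$-dimensional socle. The hardest step is to upgrade the depth estimate to full Cohen-Macaulayness whenever $\mu({\mathfrak m}) - d \geq 3$; a change of basis arranging $\bar u_1 \bar u_i \equiv 0 \pmod{J{\mathfrak m}}$ for $i \geq 2$ (with $\bar u_1^2$ generating ${\mathfrak m}^2/J{\mathfrak m}$) converts the socle constraint $t(R) = 2$ into rigid relations in the multiplication table of $R/J$ that are rich enough, for $\mu({\mathfrak m}) - d \geq 3$, to pin $J \cap x{\mathfrak m}^{n-1}$ inside $J{\mathfrak m}^n$. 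The boundary value $\mu({\mathfrak m}) - d = 2$ is precisely where this verification can genuinely fail, accounting for the single exception in the statement.
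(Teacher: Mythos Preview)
Your Part~(a) is correct and is exactly the paper's argument: split on whether $\lambda(\overline{I^2}/J\overline{I})\geq t(R)-1$ (Theorem~\ref{theorem3.3}) or $t(R)=2$ with $\overline{I^2}=J\overline{I}$, and in the latter case pin $\overline{e}_1$ to one of two values via Proposition~\ref{proposition3.2} and finish with Proposition~\ref{prop2.3}(a) / Proposition~\ref{proposition3.4}.

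For Part~(b) your three-sub-case split along $J{\mathfrak m}\subseteq{\mathfrak m}^2\subseteq\overline{{\mathfrak m}^2}$ is the same skeleton the paper uses, and your reduction to the single borderline configuration $t(R)=2$, $\lambda(\overline{I^2}/J{\mathfrak m})=2$, $\lambda({\mathfrak m}^2/J{\mathfrak m})=1$ is clean; your observation that $t(R)=2$ forces $\mu({\mathfrak m})-d\geq 2$ is correct and useful. The depth bound $\geq d-1$ via the Rossi--Valla solution of Sally's conjecture (your ``Sally's classical theorem'', the paper's \cite{RV0}) is also fine.

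The genuine gap is your ``hardest step'': showing ${\mathcal G}({\mathfrak m})$ is Cohen--Macaulay when $\mu({\mathfrak m})-d\geq 3$. What you wrote there is not a proof but a heuristic --- a change of basis is announced, ``rigid relations in the multiplication table'' are invoked, and the containment $J\cap x{\mathfrak m}^{n-1}\subseteq J{\mathfrak m}^n$ is asserted without any verification. None of the actual work is carried out, and this is precisely the delicate part (Sally's original example lives exactly at the boundary $\mu({\mathfrak m})-d=2$, so one really must explain why larger embedding codimension behaves better). The paper does not attempt a direct argument here at all: it simply invokes Sally's theorem \cite[Theorem~3.1]{S4} for rings of embedding dimension $e_0+d-2$, which gives Cohen--Macaulayness of ${\mathcal G}({\mathfrak m})$ once the type is small relative to the multiplicity; with $t(R)=2$ this disposes of all values $\mu({\mathfrak m})-d\neq 2$, leaving only the stated exceptional case. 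You should replace your sketch by that citation.
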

\begin{proof}
Assume $t(R)=1$. From Theorem~\ref{theorem3.3} it follows that $\overline{{\mathcal G}}$ is Cohen-Macaulay (see also \cite[Theorem 3]{I}).
Now $\lambda({\mathfrak m}^2/J{\mathfrak m}) = \lambda({\mathfrak m}^2/J\overline{I}) \leq \lambda(\overline{I^2}/J\overline{I}) \leq 1$
by Proposition~\ref{proposition3.1}. If ${\mathfrak m}^2=J{\mathfrak m}$ then  ${\mathcal G}({\mathfrak m})$
is Cohen-Macaulay (see \cite[Theorems 1 and 2]{S}). If $\lambda({\mathfrak m}^2/J{\mathfrak m}) = \lambda(\overline{I^2}/J{\mathfrak m}) = 1$,
then ${\mathfrak m}^2 = \overline{I^2}$ and hence by Theorem~\ref{theorem3.3} we have that for all $n \geq 1$
\[
{\mathfrak m}^{n+1} = {\overline I^{n+1}} \subseteq \overline{I^{n+1} }=J^{n-1}\overline{I^2} = J^{n-1}{\mathfrak m}^2 \subseteq {\mathfrak m}^{n+1}.
\]
Thus $\overline{{\mathcal G}}={\mathcal G}({\mathfrak m})$ and so  ${\mathcal G}({\mathfrak m})$ is Cohen-Macaulay as well 
(see also \cite[Proposition~3.3 and Theorem~3.4]{S2}).

Assume now $t(R)=2$. By Proposition~\ref{proposition3.1} we have $\lambda(\overline{I^2}/J\overline{I}) \leq t(R)=2$
and, by Theorem~\ref{theorem3.3}, $\overline{{\mathcal G}}$ is Cohen-Macaulay whenever $\overline{I^2} \not= J\overline{I}$.
Assume $\overline{I^2}=J\overline{I}$. By Proposition~\ref{proposition3.2} one has
\[
\lambda(\overline{I}/J) \leq \overline{e}_1 \leq \lambda(\overline{I}/J)+1.
\]
If $\overline{e}_1 = \lambda(\overline{I}/J)$, then $\overline{{\mathcal G}}$ is Cohen-Macaulay by \cite[Theorem 2(1)]{I}. If $\overline{e}_1 =
\lambda(\overline{I}/J)+1$, then the same conclusion follows from Proposition~\ref{proposition3.4}.

Now we are going to study ${\mathcal G}({\mathfrak m})$. As before $\lambda({\mathfrak m}^2/J{\mathfrak m}) \leq \lambda(\overline{I^2}/J\overline{I}) \leq 2$.
If ${\mathfrak m}^2=J{\mathfrak m}$, clearly ${\mathcal G}({\mathfrak m})$ is Cohen-Macaulay (see \cite[Theorems 1 and 2]{S}).

If $\lambda({\mathfrak m}^2/J{\mathfrak m})=2$,
then ${\mathfrak m}^2=\overline{I^2}$.  Since $\overline{I^{n+1}}=J^{n-1}\overline{I^2}$ again by Theorem~\ref{theorem3.3} we have that
${\mathfrak m}^n=\overline{I^n}$ for all $n$, as shown above. In particular $\overline{{\mathcal G}}={\mathcal G}({\mathfrak m})$ and 
${\mathcal G}({\mathfrak m})$ is Cohen-Macaulay as well.

Finally, if $\lambda({\mathfrak m}^2/J{\mathfrak m})=1$ and $\mu({\mathfrak m})-d < t(R)=2$, then by \cite[Theorem~3.1]{S4} 
${\mathcal G}({\mathfrak m})$ is Cohen-Macaulay. Thus we are left to consider the case when $\mu({\mathfrak m})-d=2$ and 
$1=\lambda({\mathfrak m}^2/J{\mathfrak m}) < \lambda(\overline{I^2}/J \overline{I})$.
Otherwise, as before, ${\mathfrak m}^n=\overline{I^n}$ for all $n$, thus  $\overline{{\mathcal G}}={\mathcal G}({\mathfrak m})$ and
${\mathcal G}({\mathfrak m})$ is Cohen-Macaulay as well.
By \cite[Theorem~2.1]{RV0} we can only conclude that ${\rm depth}\, {\mathcal G}({\mathfrak m}) \geq d-1$.
\end{proof}

We conclude our paper by showing that ${\mathcal G}({\mathfrak m})$ fails to be Cohen-Macaulay in the exceptional case described in 
Theorem~\ref{theorem3.5}$(b)$.

\begin{example}{\rm
Consider first the one-dimensional Cohen-Macaulay local ring $S$ of type $2$ and multiplicity $4$ given by the semigroup ring $k[\![t^4, t^5, t^{11}]\!]$,
which can be easily seen to be isomorphic to $k[\![X,Y,Z]\!]/(Z^2-X^3Y^2,Y^3-XZ, X^4-YZ)$.
It was shown by J. Sally in \cite{S} that the associated graded ring ${\mathcal G}({\mathfrak m})$ is not Cohen-Macaulay. Notice now 
that $\overline{{\mathfrak m}^n} = (t^{4n}) k[\![ t ]\!] \cap R$ for all $n\geq 1$ and that the conductor of $R$ is given by $t^8$. Thus, $\overline{{\mathfrak m}^2} = 
{\mathfrak m}^2 + (t^{11})$ whereas $\overline{{\mathfrak m}^n} = {\mathfrak m}^n$ for all $n \geq 3$.
This shows that the associated graded ring $\overline{\mathcal G}$ of the normal filtration of ${\mathfrak m}$ and ${\mathcal G}({\mathfrak m})$ have the same 
Hilbert polynomial. In particular, $e_0=\overline{e}_0$ and $e_1=\overline{e}_1$. 

\medskip

Consider now the ring $R$ obtained adjoining two indeterminates $U$ and $V$. Thus $R \cong k[\![x,y,z,U,V]\!]$, where $x,y,$ and $z$ denote the 
images of $X, Y$ and $Z$,respectively, modulo the ideal $(Z^2-X^3Y^2,Y^3-XZ, X^4-YZ)$. Let ${\mathfrak n}$ denote the maximal $(x,y,z,U,V)$ 
of $R$ and observe that $J=(x,U,V)$ is a minimal reduction of ${\mathfrak n}$. In addition to the ${\mathfrak n}$-adic and the integral closure filtrations,
${\mathbb N} =\{ {\mathfrak n}^n \}$ and ${\mathbb F}= \{ \overline{{\mathfrak n}^n} \}$ respectively, we also consider the following filtration ${\mathbb G}$ defined by
\[
G_0=R, \quad G_1 = {\mathfrak n}, \quad G_2 = ({\mathfrak n}^2, z), \quad \text{and} \quad G_n = J^{n-2}G_2 
\]
for all $n \geq 3$. Observe that $G_2 \cap J =JG_1$ so that  the associated graded ring of  the filtration 
${\mathbb G}$ is Cohen-Macaulay by the Valabrega-Valla criterion. Moreover, the Rees algebra ${\mathcal R}({\mathbb G})$
of the filtration ${\mathbb G}$ is also Cohen-Macaulay since the reduction 
number of the filtration is $2$ and it is strictly smaller than the dimension of $R$.

We now claim that the filtration ${\mathbb G}$ is actually the normal filtration ${\mathbb F}$. In fact, it is easy to observe that
\[
G_n = {\mathfrak n}^n+z\cdot (U,V)^{n-2}  
\]
for $n \geq 3$. Thus, going modulo $U$ and $V$, we obtain the equalities $G_n \cdot S = {\mathfrak m}^n = \overline{{\mathfrak m}^n}$ in the 
one-dimensional ring $S$. This gives us that 
$e_0=e_0({\mathbb G})=\overline{e}_0$ and  $e_1=e_1({\mathbb G})=\overline{e}_1$. Since we have the inclusion of Rees algebra
${\mathcal R}({\mathbb G}) \subset \overline{\mathcal R}$ with ${\mathcal R}({\mathbb G})$ Cohen-Macaulay and $e_1({\mathbb G})=\overline{e}_1$, by 
\cite[Theorem 2.2]{PUV} we conclude that the filtration ${\mathbb G}$ is the normal filtration. In particular $\overline{\mathcal G}$ is Cohen-Macaulay.

Finally, by \cite[Proposition 4.6]{HM} (see also \cite[Proposition 1.9]{GR} for a simpler proof), we obtain that $\overline{e}_3=0$, as $\overline{\mathcal G}$ 
is Cohen-Macaulay and the reduction number of the normal filtration is $2$. However ${\mathcal G}({\mathfrak m})$ is not Cohen-Macaulay. }
\end{example} 

\noindent{\bf Acknowledgments.} 
This work started at the University of Kentucky in 2011 where the three authors spent a week sponsored by the
NSF DMS-0753127 grant, Special Algebra Meetings in the Midwest. 
The authors are very appreciative of the hospitality offered by this institution. Part of this work was continued at the Mathematical Sciences Research Institute (MSRI) in Berkeley, 
where the authors spent  time in connection with the 2012-13 thematic year on Commutative Algebra, supported by NSF grant 0932078000. 
The authors would like to thank MSRI for its partial support and hospitality.

\end{document}